\documentclass[12pt, reqno]{amsart}
\UseRawInputEncoding
\usepackage{amsmath, amsthm, amscd, amsfonts, amssymb, graphicx, color}
\usepackage[bookmarksnumbered, colorlinks, plainpages]{hyperref}
\input{mathrsfs.sty}

\hypersetup{colorlinks=true,linkcolor=red, anchorcolor=green,
citecolor=cyan, urlcolor=red, filecolor=magenta, pdftoolbar=true}

\textheight 22.5truecm \textwidth 14.5truecm
\setlength{\oddsidemargin}{0.35in}\setlength{\evensidemargin}{0.35in}

\setlength{\topmargin}{-.5cm}

\newtheorem{theorem}{Theorem}[section]
\newtheorem{lemma}[theorem]{Lemma}

\newtheorem{corollary}[theorem]{Corollary}
\theoremstyle{definition}
\newtheorem{definition}[theorem]{Definition}

\theoremstyle{remark}
\newtheorem{remark}[theorem]{Remark}
\numberwithin{equation}{section}

\begin{document}

\setcounter{page}{1}

\title[Numerical radius in Hilbert $C^*$-modules]
{Numerical radius in Hilbert $C^*$-modules}

\author[A. Zamani]{Ali Zamani}

\address{School of Mathematics and Computer Sciences, Damghan University, Damghan, P.~O.~BOX 36715-364, Iran}
\email{zamani.ali85@yahoo.com}

\subjclass[2010]{Primary 46L05; Secondary 47A30, 47A12, 46B20.}

\keywords{$C^*$-algebra, Hilbert $C^*$-module, linking algebra, numerical range, numerical radius, inequality.}
\begin{abstract}
Utilizing the linking algebra of a Hilbert $C^*$-module $\big(\mathscr{V}, {\|\!\cdot\!\|}\big)$,
we introduce $\Omega(x)$ as a definition of numerical radius for an element $x\in\mathscr{V}$ and then
show that $\Omega(\cdot)$ is a norm on $\mathscr{V}$ such that
$\frac{1}{2}{\|x\|} \leq \Omega(x) \leq {\|x\|}$.
In addition, we obtain an equivalent condition
for $\Omega(x) = \frac{1}{2}{\|x\|}$.
Moreover, we present a refinement of the triangle inequality for the norm $\Omega(\cdot)$.
Some other related results are also discussed.
\end{abstract} \maketitle
\section{Introduction and preliminaries}
The notion of Hilbert $C^*$-module is a natural generalization of that of Hilbert space
arising under replacement of the field of scalars $\mathbb{C}$ by a $C^*$-algebra.
This concept plays a significant role in the theory of operator algebras,
quantum groups, noncommutative geometry and $K$-theory; see \cite{L, M.T}.

Let us give that some necessary background and set up our notation.
An element $a$ in a $C^*$-algebra $\mathscr{A}$ is called positive (we write $0 \leq a$)
if $a = b^*b$ for some $b\in \mathscr{A}$.
For an element $a$ of $\mathscr{A}$, we denote by
\begin{align*}
\,{\rm Re}\,a = \frac{1}{2}(a + a^*), \quad \,{\rm Im}\,a = \frac{1}{2i}(a - a^*)
\end{align*}
the real and the imaginary part of $a$.
By $\mathscr{A}'$ we denote the dual space of $\mathscr{A}$. A positive linear functional of $\mathscr{A}$ is a map
$\varphi\in \mathscr{A}'$ such that $0\leq\varphi(a)$ whenever $0\leq a$. The set of all states of $\mathscr{A}$, that
is, the set of all positive linear functionals of $\mathscr{A}$ of norm $1$, is denoted by $\mathcal{S}(\mathscr{A})$.
An inner product module over $\mathscr{A}$ is a (left)
$\mathscr{A}$-module $\mathscr{V}$ equipped with an $\mathscr{A}$-valued
inner product $\langle\cdot, \cdot\rangle$, which is $\mathbb{C}$-linear and
$\mathscr{A}$-linear in the first variable and has the properties $\langle x, y
\rangle^*=\langle y, x\rangle$ as well as $0\leq\langle x, x\rangle$ with equality
if and only if $x = 0$. The $\mathscr{A}$-module $\mathscr{V}$ is called a Hilbert $\mathscr{A}$-module
if it is complete with respect to the norm ${\|x\|} = {\|\langle x, x\rangle\|}^{\frac{1}{2}}$.
In a Hilbert $\mathscr{A}$-module $\mathscr{V}$ we have the following version of the Cauchy–-Schwarz inequality:
\begin{align}\label{C.S.I}
\langle y, x\rangle\langle x, y\rangle \leq\|x\|^2\langle y, y\rangle, \qquad (x, y\in\mathscr{V}).
\end{align}
Every $C^*$-algebra $\mathscr{A}$ can be regarded as a Hilbert $C^*$-module over itself where the inner product is
defined by $\langle a, b\rangle = a^*b$.
Let $\mathscr{V}$ and $\mathscr{W}$ be two Hilbert $\mathscr{A}$-modules.
A mapping $T\,:\mathscr{V}\longrightarrow \mathscr{W}$ is called adjointable if there exists a mapping
$S\,:\mathscr{W}\longrightarrow \mathscr{V}$ such that $\langle Tx, y\rangle=\langle x, Sy\rangle$
for all $x\in \mathscr{V}, y\in \mathscr{W}$. The unique mapping $S$ is denoted by $T^*$ and
is called the adjoint operator of $T$. The space $\mathbb{B}(\mathscr{V},\mathscr{W})$
of all adjointable maps between Hilbert $\mathscr{A}$-modules $\mathscr{V}$ and $\mathscr{W}$
is a Banach space, while $\mathbb{B}(\mathscr{V}) :=\mathbb{B}(\mathscr{V},\mathscr{V})$ is a $C^*$-algebra.
By $\mathbb{K}(\mathscr{V},\mathscr{W})$ we denote the closed linear subspace of $\mathbb{B}(\mathscr{V},\mathscr{W})$
spanned by $\big\{\theta_{x, y}: \, x\in \mathscr{W}, y\in \mathscr{V}\big\}$,
where $\theta_{x, y}$ is defined by $\theta_{x, y}(z) = x\langle y, z\rangle$.
Elements of $\mathbb{K}(\mathscr{V},\mathscr{W})$ are often referred to as ``compact'' operators.
We write $\mathbb{K}(\mathscr{V})$ for $\mathbb{K}(\mathscr{V},\mathscr{V})$.
Given a Hilbert $\mathscr{A}$-module $\mathscr{V}$, the linking algebra $\mathbb{L}(\mathscr{V})$
is defined as the matrix algebra of the form
\begin{align*}
\mathbb{L}(\mathscr{V})=\begin{bmatrix}
\mathbb{K}(\mathscr{A}) & \mathbb{K}(\mathscr{V}, \mathscr{A}) \\
\mathbb{K}(\mathscr{A}, \mathscr{V}) & \mathbb{K}(\mathscr{V})
\end{bmatrix}.
\end{align*}
Then $\mathbb{L}(\mathscr{V})$ has a canonical embedding as a closed subalgebra of the adjointable
operators on the Hilbert $\mathscr{A}$-module $\mathscr{A}\oplus \mathscr{V}$ via
\begin{align*}
\begin{bmatrix}
X & Y \\
Z & W
\end{bmatrix}\begin{bmatrix}
a\\
x
\end{bmatrix}=\begin{bmatrix}
Xa + Yx \\
Za + Wx
\end{bmatrix}
\end{align*}
which makes $\mathbb{L}(\mathscr{V})$ a $C^*$-algebra (cf. \cite{R.W}, Lemma~2.32 and Corollary~3.21).
Each $x\in \mathscr{V}$ induces the maps ${r}_{x}\in\mathbb{B}(\mathscr{A}, \mathscr{V})$ and ${l}_{x}\in\mathbb{B}(\mathscr{V}, \mathscr{A})$
given by ${r}_{x}(a) = xa$ and ${l}_{x}(y)=\langle x, y\rangle$, respectively, such that ${r}^*_{x} = {l}_{x}$.
The map $x\mapsto {r}_{x}$ is an isometric linear isomorphism of $\mathscr{V}$
to $\mathbb{K}(\mathscr{A}, \mathscr{V})$ and $x\mapsto {l}_{x}$ is an isometric conjugate linear isomorphism of
$\mathscr{V}$ to $\mathbb{K}(\mathscr{V}, \mathscr{A})$. Further, every $a\in \mathscr{A}$
induces the map ${T}_{a}\in\mathbb{K}(\mathscr{A})$ given by ${T}_{a}(b) = ab$. The
map $a\mapsto {T}_{a}$ defines an isomorphism between $C^*$-algebras $\mathscr{A}$ and $\mathbb{K}(\mathscr{A})$.
Therefore, we may write
\begin{align*}
\mathbb{L}(\mathscr{V}) = \left\{\begin{bmatrix}
{T}_{a} & {l}_{y} \\
{r}_{x} & T
\end{bmatrix} : a\in\mathscr{A},\, x, y\in \mathscr{V},\, T\in\mathbb{K}(\mathscr{V})\right\},
\end{align*}
and identify the $C^*$-subalgebras of compact operators with the corresponding corners in the linking algebra:
$\mathbb{K}(\mathscr{A}) = \mathbb{K}(\mathscr{A}\oplus 0) \subseteq \mathbb{K}(\mathscr{A}\oplus \mathscr{V}) = \mathbb{L}(\mathscr{V})$ and
$\mathbb{K}(\mathscr{V}) = \mathbb{K}(0\oplus \mathscr{V})\subseteq \mathbb{K}(\mathscr{A}\oplus \mathscr{V}) = \mathbb{L}(\mathscr{V})$.
We refer the reader to \cite{L, M.T} for more information on Hilbert $C^*$-modules and linking algebras.

Now, let $\mathbb{B}(\mathscr{H})$ denote the $C^*$-algebra of all bounded linear
operators on a complex Hilbert space $\mathscr{H}$ with inner product $[\cdot, \cdot]$.
The numerical range of an element $A\in\mathbb{B}(\mathscr{H})$ is defined
\begin{align*}
W(A):= \left\{[A\xi, \xi]: \, \xi \in\mathscr{H}, {\|\xi\|} = 1\right\}.
\end{align*}
It is known that $W(A)$ is a nonempty bounded convex subset of $\mathbb{C}$ (not necessarily
closed). This concept is useful in studying linear operators and have attracted the attention of many authors
in the last few decades (e.g., see \cite{G.R}, and references therein).
The numerical radius of $A$ is given by
\begin{align*}
w(A)= \sup\left\{|[A\xi, \xi]|: \, \xi \in\mathscr{H}, {\|\xi\|} = 1\right\}.
\end{align*}
It is known that $w(\cdot)$ is a norm on $\mathbb{B}(\mathscr{H})$ and satisfies
\begin{align*}
\frac{1}{2}{\|A\|} \leq w(A) \leq {\|A\|}
\end{align*}
for each $A\in\mathbb{B}(\mathscr{H})$. Some generalizations of the numerical radius
$A\in\mathbb{B}(\mathscr{H})$ can be found in \cite{A.K.2, Z.W}.

In the next section, we first utilize the linking algebra $\mathbb{L}(\mathscr{V})$ of a Hilbert $\mathscr{A}$-module $\mathscr{V}$
to introduce $\Phi(x)$ as a definition of numerical range for an arbitrary element $x\in\mathscr{V}$.
We then use this set to define numerical radius of $x$ and denote it by $\Omega(x)$.
In particular, we show that $\Omega(\cdot)$ is a norm on $\mathscr{V}$, which is equivalent
to the norm ${\|\!\cdot\!\|}$ and the following inequalities hold for every $x\in \mathscr{V}$:
\begin{align}\label{I.1.1}
\frac{1}{2}{\|x\|} \leq \Omega(x) \leq {\|x\|}.
\end{align}
We also establish an inequality that refines the first inequality in (\ref{I.1.1}).
In addition, we prove that $\Omega(x) = \frac{1}{2}{\|x\|}$ if and only if
${\|x\|} = {\left\|\begin{bmatrix}
0 & \overline{\lambda}{l}_{x}
\\ \lambda {r}_{x} & 0
\end{bmatrix}\right\|}$ for all complex unit $\lambda$.
Furthermore, for $x\in \mathscr{V}$ and $a\in \mathscr{A}$ we prove that
\begin{align*}
\Omega(xa \pm xa^*) \leq 2{\|a \pm a^*\|} \Omega(x).
\end{align*}
We finally present a refinement of the triangle inequality for the norm $\Omega(\cdot)$.
\section{Main results}
We start our work with the following definition.
\begin{definition}\label{D.2.00}
Let $\mathscr{V}$ be a Hilbert $\mathscr{A}$-module
and let $\mathbb{L}(\mathscr{V})$ be the linking algebra of $\mathscr{V}$.
The numerical range of $x\in \mathscr{V}$ is defined as the set
\begin{align*}
\Phi(x):= \left\{\varphi\left(\begin{bmatrix}
0 & 0
\\ {r}_{x} & 0
\end{bmatrix}\right): \, \varphi \in \mathcal{S}\big(\mathbb{L}(\mathscr{V})\big)\right\}.
\end{align*}
\end{definition}
Next, we present some properties of the numerical range in Hilbert $C^*$-modules.
\begin{theorem}\label{T.2.01}
Let $x$ and $y$ be elements of a Hilbert $\mathscr{A}$-module $\mathscr{V}$ and let $\alpha \in \mathbb{C}$.
Then
\begin{itemize}
\item[(i)] $\Phi(\alpha x) = \alpha \Phi(x)$ (homogeneous).
\item[(ii)] $\Phi(x+y)\subseteq \Phi(x) + \Phi(y)$ (subadditive).
\item[(iii)] $\Phi(x)$ is a nonempty compact convex subset of $\mathbb{C}$.
\end{itemize}
\end{theorem}
\begin{proof}
Let $\mathbb{L}(\mathscr{V})$ be the linking algebra of $\mathscr{V}$.
For every $a\in\mathscr{A}$, we have
\begin{align*}
{r}_{\alpha x}(a) = (\alpha x)a = \alpha (xa) = \big(\alpha{r}_{x}\big)(a)
\end{align*}
and
\begin{align*}
{r}_{x+y}(a) = (x+y)a = xa + ya = \big({r}_{x} + {r}_{y}\big)(a).
\end{align*}
Hence ${r}_{\alpha x} = \alpha{r}_{x}$ and ${r}_{x+y} = {r}_{x} + {r}_{y}$.
Thus (i) and (ii) follow easily from the definition.

We now prove (iii). Since the existence of states on $\mathbb{L}(\mathscr{V})$
is guaranteed by the Hahn--Banach theorem,
we have $\Phi(x) \neq \emptyset$. The convexity of $\Phi(x)$ is an easy consequence
of the fact that a convex combination of two states is also a state. As for the compactness,
note that the set $\mathcal{S}\big(\mathbb{L}(\mathscr{V})\big)$ is a weak*-closed subset of the unit ball
$\left\{\varphi\in \mathbb{L}^{'}(\mathscr{V}):\, \|\varphi\|\leq 1\right\}$ of $\mathbb{L}^{'}(\mathscr{V})$.
Since, by the Banach--Alaoglu theorem, the latter is weak*-compact, the same is true for $\mathcal{S}\big(\mathbb{L}(\mathscr{V})\big)$.
Hence $\Phi(x)$, the image of the weak*-continuous mapping $\varphi \mapsto \varphi\left(\begin{bmatrix}
0 & 0
\\ {r}_{x} & 0
\end{bmatrix}\right)$ for $\varphi\in\mathcal{S}\big(\mathbb{L}(\mathscr{V})\big)$, is compact in $\mathbb{C}$.
\end{proof}
\begin{remark}
It is known that the set of all states of a unital $C^*$-algebra $\mathscr{A} \subseteq \mathbb{B}(\mathscr{H})$
is a weak*-closed convex hull of the set of all vector states of $\mathscr{A}$, i.e., the
states of $\mathscr{A}$ of the form $A \rightarrow [A\xi, \xi]$ for some unit vector $\xi$ in $\mathscr{H}$.
Also, for the Hilbert module $\mathscr{V}=\mathbb{B}(\mathscr{H})$ over the $C^*$-algebra $\mathbb{B}(\mathscr{H})$ is well known to be valid
$\mathbb{K}(\mathbb{B}(\mathscr{H}))= \mathbb{K}(\mathscr{V}, \mathbb{B}(\mathscr{H})) = \mathbb{K}(\mathbb{B}(\mathscr{H}), \mathscr{V})
= \mathbb{K}(\mathscr{V}) = \mathbb{B}(\mathscr{H})$ (see \cite[Remark~1.13]{B.G}), so all corners in the linking algebra $\mathbb{L}(\mathscr{V})$
are equal to $\mathbb{B}(\mathscr{H})$. Hence, for $A\in \mathbb{B}(\mathscr{H})$, we have $\Phi(A) = \overline{W(A)}$.
\end{remark}
Now, we are in a position to introduce numerical radius for elements of a Hilbert $C^*$-module.
Some other related topics can be found in \cite{A.M, C.C.S, M.A.O, Raji.1, Raji.2, T.Z.P.A}.
\begin{definition}\label{D.2.02}
Let $\mathscr{V}$ be a Hilbert $\mathscr{A}$-module
and let $\mathbb{L}(\mathscr{V})$ be the linking algebra of $\mathscr{V}$.
The numerical radius of an element $x\in \mathscr{V}$ is defined as
\begin{align*}
\Omega(x):= \sup\left\{\left|\varphi\left(\begin{bmatrix}
0 & 0
\\ {r}_{x} & 0
\end{bmatrix}\right)\right|: \, \varphi \in \mathcal{S}\big(\mathbb{L}(\mathscr{V})\big)\right\}.
\end{align*}
\end{definition}
In the following theorem, we prove that $\Omega(\cdot)$ is a norm on Hilbert $C^*$-module $\mathscr{V}$,
which is equivalent to the norm ${\|\!\cdot\!\|}$.
\begin{theorem}\label{T.2.2}
Let $\mathscr{V}$ be a Hilbert $\mathscr{A}$-module.
Then $\Omega(\cdot)$ is a norm on $\mathscr{V}$ and the following inequalities hold for every $x\in \mathscr{V}$:
\begin{align*}
\frac{1}{2}{\|x\|} \leq \Omega(x) \leq {\|x\|}.
\end{align*}
\end{theorem}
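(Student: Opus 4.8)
The plan is to recognize the quantity $\Omega(x)$ as the numerical radius of a single element of the $C^*$-algebra $\mathbb{L}(\mathscr{V})$, and then to transport the classical properties of the numerical radius back to $\mathscr{V}$ through the isometric embedding $x\mapsto r_x$. First I would introduce the element
\begin{align*}
R_x := \begin{bmatrix} 0 & 0 \\ r_x & 0 \end{bmatrix}\in\mathbb{L}(\mathscr{V}).
\end{align*}
Using $r_x^*=l_x$ one checks that $(\lambda R_x)^*=\begin{bmatrix} 0 & \overline{\lambda}\,l_x \\ 0 & 0 \end{bmatrix}$, so that the matrix appearing in Definition \ref{D.2.1} is exactly $2\,\mathrm{Re}(\lambda R_x)$ and, in particular, is self-adjoint. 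Since in any $C^*$-algebra the formula $w(A)=\sup_{\lambda\in\mathbb{T}}\|\mathrm{Re}(\lambda A)\|$ defines the numerical radius intrinsically (it uses only the norm and the involution, and is therefore independent of a faithful representation on a Hilbert space), this yields the identification
\begin{align*}
\Omega(x)=\sup_{\lambda\in\mathbb{T}}\big\|\mathrm{Re}(\lambda R_x)\big\|_{_{\mathbb{L}(\mathscr{V})}}=w(R_x).
\end{align*}

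With this identity in hand, the norm axioms for $\Omega$ follow from those of $w$ together with the fact that $x\mapsto R_x$ is linear (because $x\mapsto r_x$ is linear) and injective (because $x\mapsto r_x$ is an isometric isomorphism onto $\mathbb{K}(\mathscr{A},\mathscr{V})$). Concretely, homogeneity and subadditivity transfer directly from $w(\alpha R_x)=|\alpha|\,w(R_x)$ and $w(R_{x+y})=w(R_x+R_y)\leq w(R_x)+w(R_y)$, while definiteness follows because $\Omega(x)=0$ forces $R_x=0$, hence $r_x=0$, hence $x=0$. All of the needed properties of $w$ can themselves be read off from the supremum formula: homogeneity by reparametrising $\lambda$, the triangle inequality from subadditivity of the norm, and definiteness by evaluating at $\lambda=1$ and $\lambda=i$ to recover $A+A^*$ and $A-A^*$.

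For the two-sided estimate I would compute the norm of $R_x$. Since the corner embeddings in the linking algebra are isometric and $x\mapsto r_x$ is isometric, one has $\|R_x\|_{_{\mathbb{L}(\mathscr{V})}}=\|x\|_{_{\mathscr{V}}}$; alternatively this is seen from $R_x^*R_x=\begin{bmatrix} T_{\langle x,x\rangle} & 0 \\ 0 & 0 \end{bmatrix}$ together with the $C^*$-identity, giving $\|R_x\|^2=\|T_{\langle x,x\rangle}\|=\|\langle x,x\rangle\|_{_{\mathscr{A}}}=\|x\|_{_{\mathscr{V}}}^2$. Feeding this into the classical numerical radius inequality $\frac{1}{2}\|A\|\leq w(A)\leq\|A\|$ applied to $A=R_x$ yields precisely $\frac{1}{2}\|x\|_{_{\mathscr{V}}}\leq\Omega(x)\leq\|x\|_{_{\mathscr{V}}}$.

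The step I expect to be the main obstacle is the clean identification $\Omega(x)=w(R_x)$: one must justify that the numerical radius formula is meaningful in the abstract $C^*$-algebra $\mathbb{L}(\mathscr{V})$ and verify the self-adjointness together with the Cartesian-decomposition identity $2\,\mathrm{Re}(\lambda R_x)=\begin{bmatrix} 0 & \overline{\lambda}\,l_x \\ \lambda r_x & 0 \end{bmatrix}$ carefully using $r_x^*=l_x$. Once this bridge is in place, both the norm axioms and the equivalence-of-norms estimates are immediate consequences of the corresponding classical facts about $w(\cdot)$.
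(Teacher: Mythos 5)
Your proposal is correct, but it is organized around a genuinely different pivot than the paper's proof. The paper never introduces $R_x$ or invokes the abstract numerical radius: it verifies the norm axioms by direct matrix computations in $\mathbb{L}(\mathscr{V})$, proves $\Omega(x)\leq{\|x\|}_{_{\mathscr{V}}}$ by splitting $\begin{bmatrix} 0 & \overline{\lambda}{l}_{x}\\ \lambda{r}_{x} & 0\end{bmatrix}$ into its two corner summands (each of norm ${\|x\|}_{_{\mathscr{V}}}$), and proves $\frac{1}{2}{\|x\|}_{_{\mathscr{V}}}\leq\Omega(x)$ by specializing to $\lambda=1$ and $\lambda=i$ and adding the two resulting matrices --- exactly the computations you relegate to a fallback remark at the end of your second paragraph. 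Your primary route instead makes explicit the identification $\Omega(x)=\sup_{\lambda\in\mathbb{T}}\big\|\mathrm{Re}(\lambda R_x)\big\|_{_{\mathbb{L}(\mathscr{V})}}=w(R_x)$, which the paper only gestures at in its motivating discussion of Yamazaki's formula, computes $\|R_x\|_{_{\mathbb{L}(\mathscr{V})}}={\|x\|}_{_{\mathscr{V}}}$ via the $C^*$-identity from $R_x^*R_x=\begin{bmatrix} T_{\langle x,x\rangle} & 0\\ 0 & 0\end{bmatrix}$ (a correct computation), and then imports the classical facts that $w$ is a norm with $\frac{1}{2}\|A\|\leq w(A)\leq\|A\|$. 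What your approach buys is brevity and a conceptual explanation of the constants $\frac{1}{2}$ and $1$: it exhibits $\Omega$ as literally the restriction of the linking algebra's numerical radius to the corner copy of $\mathscr{V}$, so the two-sided bound is not an analogue of the Hilbert-space inequality but an instance of it. The cost is reliance on the validity of these classical facts for elements of an abstract (and here non-unital) $C^*$-algebra; this is legitimate --- one passes to a faithful representation, and your supremum formula uses only the norm and involution, hence is representation-independent --- but it is not self-contained, whereas the paper's hands-on route is, and moreover its intermediate steps \eqref{T.2.2.I.2} and \eqref{T.2.2.I.3} are reused verbatim in Theorem \ref{T.2.3} and Corollary \ref{C.2.4}. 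If you write your version up, the one point to spell out is precisely the one you flag as the main obstacle: that $\mathbb{L}(\mathscr{V})=\mathbb{K}(\mathscr{A}\oplus\mathscr{V})$ need not be unital, so ``numerical radius'' must be interpreted via states or a faithful representation before the classical inequalities can be quoted.
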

\begin{proof}
Let $\mathbb{L}(\mathscr{V})$ be the linking algebra of $\mathscr{V}$.
Let $x\in \mathscr{V}$. Clearly, $\Omega(x)\geq 0$.
Let us now suppose $\Omega(x) = 0$. Then, by Definition \ref{D.2.02},
$\begin{bmatrix}
0 & 0
\\ {r}_{x} & 0
\end{bmatrix} = 0$.
Since ${\left\|\begin{bmatrix}
0 & 0
\\ {r}_{x} & 0
\end{bmatrix}\right\|}
= {\|x\|}$,
we get ${\|x\|}=0$ and therefore, $x = 0$.
Further, by Theorem \ref{T.2.01} (i)-(ii), for $y, z \in \mathscr{V}$ and $\alpha \in \mathbb{C}$ we have
$\Omega(\alpha y) = |\alpha|\Omega(y)$ and
$\Omega(y + z)\leq \Omega(y) + \Omega(z)$. Thus $\Omega(\cdot)$ is a norm on $\mathscr{V}$.

On the other hands, for every $\varphi\in\mathcal{S}\big(\mathbb{L}(\mathscr{V})\big)$, we have
\begin{align*}
\left|\varphi\left(\begin{bmatrix}
0 & 0
\\ {r}_{x} & 0
\end{bmatrix}\right)\right| \leq
{\left\|\begin{bmatrix}
0 & 0
\\ {r}_{x} & 0
\end{bmatrix}\right\|}
= {\|x\|}.
\end{align*}
So, by taking the supremum over $\varphi\in\mathcal{S}\big(\mathbb{L}(\mathscr{V})\big)$ in the above inequality, we deduce that
\begin{align}\label{T.2.2.I.1}
\Omega(x) \leq {\|x\|}.
\end{align}
Now let
$\begin{bmatrix}
0 & 0
\\ {r}_{x} & 0
\end{bmatrix} =\,{\rm Re}\left(\begin{bmatrix}
0 & 0
\\ {r}_{x} & 0
\end{bmatrix}\right) + i\,{\rm Im}\left(\begin{bmatrix}
0 & 0
\\ {r}_{x} & 0
\end{bmatrix}\right)$ be the Cartesian decomposition of $\begin{bmatrix}
0 & 0
\\ {r}_{x} & 0
\end{bmatrix}$.
By \cite[Theorem~3.3.6]{Mu}, there exist $\varphi_1, \varphi_2\in \mathcal{S}\big(\mathbb{L}(\mathscr{V})\big)$ such that
\begin{align}\label{T.2.2.I.1.1}
\left|\varphi_1\left(\,{\rm Re}\left(\begin{bmatrix}
0 & 0
\\ {r}_{x} & 0
\end{bmatrix}\right)\right)\right| = {\left\|\,{\rm Re}\left(\begin{bmatrix}
0 & 0
\\ {r}_{x} & 0
\end{bmatrix}\right)\right\|}
\end{align}
and
\begin{align}\label{T.2.2.I.1.2}
\left|\varphi_2\left(\,{\rm Im}\left(\begin{bmatrix}
0 & 0
\\ {r}_{x} & 0
\end{bmatrix}\right)\right)\right| = {\left\|\,{\rm Im}\left(\begin{bmatrix}
0 & 0
\\ {r}_{x} & 0
\end{bmatrix}\right)\right\|}.
\end{align}
Therefore, by \eqref{T.2.2.I.1.1} and \eqref{T.2.2.I.1.2}, we have
\begin{align*}
\frac{1}{2}{\|x\|} &= \frac{1}{2}{\left\|\begin{bmatrix}
0 & 0
\\ {r}_{x} & 0
\end{bmatrix}\right\|}
\\& \leq \frac{1}{2}{\left\|\,{\rm Re}\left(\begin{bmatrix}
0 & 0
\\ {r}_{x} & 0
\end{bmatrix}\right)\right\|}
+
\frac{1}{2}{\left\|\,{\rm Im}\left(\begin{bmatrix}
0 & 0
\\ {r}_{x} & 0
\end{bmatrix}\right)\right\|}
\\& = \frac{1}{2}\left|\varphi_1\left(\,{\rm Re}\left(\begin{bmatrix}
0 & 0
\\ {r}_{x} & 0
\end{bmatrix}\right)\right)\right|
+ \frac{1}{2}\left|\varphi_2\left(\,{\rm Im}\left(\begin{bmatrix}
0 & 0
\\ {r}_{x} & 0
\end{bmatrix}\right)\right)\right|
\\& = \frac{1}{4}\left|\varphi_1\left(\begin{bmatrix}
0 & 0
\\ {r}_{x} & 0
\end{bmatrix}\right) + \overline{\varphi_1}\left(\begin{bmatrix}
0 & 0
\\ {r}_{x} & 0
\end{bmatrix}\right)\right|
+ \frac{1}{4}\left|\varphi_2\left(\begin{bmatrix}
0 & 0
\\ {r}_{x} & 0
\end{bmatrix}\right) - \overline{\varphi_2}\left(\begin{bmatrix}
0 & 0
\\ {r}_{x} & 0
\end{bmatrix}\right)\right|
\\& \leq \frac{1}{2}\left|\varphi_1\left(\begin{bmatrix}
0 & 0
\\ {r}_{x} & 0
\end{bmatrix}\right)\right|
+ \frac{1}{2}\left|\varphi_2\left(\begin{bmatrix}
0 & 0
\\ {r}_{x} & 0
\end{bmatrix}\right)\right| \leq \frac{1}{2}\Omega(x) + \frac{1}{2}\Omega(x) = \Omega(x),
\end{align*}
whence
\begin{align}\label{T.2.2.I.3}
\frac{1}{2}{\|x\|} \leq \Omega(x).
\end{align}
From (\ref{T.2.2.I.1}) and (\ref{T.2.2.I.3}), we deduce the desired result.
\end{proof}
For $A\in\mathbb{B}(\mathscr{H})$, we note that (see \cite{Y})
$w(A) = \displaystyle{\sup_{\lambda \in \mathbb{T}}}{\big\|{\rm Re}(\lambda A)\big\|}$.
Here, as usual, $\mathbb{T}$ is the unit circle of the complex plane $\mathbb{C}$.
This motivates the following result.
\begin{theorem}\label{T.2.03}
Let $\mathscr{V}$ be a Hilbert $\mathscr{A}$-module
and let $\mathbb{L}(\mathscr{V})$ be the linking algebra of $\mathscr{V}$.
Then
\begin{align*}
\Omega(x) = \frac{1}{2}\displaystyle{\sup_{\lambda \in \mathbb{T}}}{\left\|\begin{bmatrix}
0 & \overline{\lambda}{l}_{x}
\\ \lambda{r}_{x} & 0
\end{bmatrix}\right\|},
\end{align*}
for every $x\in \mathscr{V}$.
\end{theorem}
\begin{proof}
Let $x\in \mathscr{V}$. First, we show that
\begin{align}\label{T.2.0301}
\displaystyle{\sup_{\lambda \in \mathbb{T}}}\left|\,{\rm Re}\left(\lambda\varphi\left(\begin{bmatrix}
0 & 0
\\ {r}_{x} & 0
\end{bmatrix}\right)\right)\right| = \left|\varphi\left(\begin{bmatrix}
0 & 0
\\ {r}_{x} & 0
\end{bmatrix}\right)\right|
\end{align}
for every $\varphi \in \mathcal{S}\big(\mathbb{L}(\mathscr{V})\big)$.

Let $\varphi \in \mathcal{S}\big(\mathbb{L}(\mathscr{V})\big)$. We may assume that $\varphi\left(\begin{bmatrix}
0 & 0
\\ {r}_{x} & 0
\end{bmatrix}\right) \neq 0$, otherwise (\ref{T.2.0301}) trivially holds.
Put
\begin{align*}
\lambda_0 = \frac{\overline{\varphi}\left(\begin{bmatrix}
0 & 0
\\ {r}_{x} & 0
\end{bmatrix}\right)}{\left|\varphi\left(\begin{bmatrix}
0 & 0
\\ {r}_{x} & 0
\end{bmatrix}\right)\right|}.
\end{align*}
Then we have
\begin{align*}
\left|\varphi\left(\begin{bmatrix}
0 & 0
\\ {r}_{x} & 0
\end{bmatrix}\right)\right| &= \left|\,{\rm Re}\left(\lambda_0\varphi\left(\begin{bmatrix}
0 & 0
\\ {r}_{x} & 0
\end{bmatrix}\right)\right)\right|
\\&
\leq \sup_{\lambda \in \mathbb{T}}\left|\,{\rm Re}\left(\lambda\varphi\left(\begin{bmatrix}
0 & 0
\\ {r}_{x} & 0
\end{bmatrix}\right)\right)\right|
\\& \leq \sup_{\lambda \in \mathbb{T}}\left|\lambda\varphi\left(\begin{bmatrix}
0 & 0
\\ {r}_{x} & 0
\end{bmatrix}\right)\right| = \left|\varphi\left(\begin{bmatrix}
0 & 0
\\ {r}_{x} & 0
\end{bmatrix}\right)\right|,
\end{align*}
and hence (\ref{T.2.0301}) holds.

Now, since $\begin{bmatrix}
0 & \overline{\lambda}{l}_{x}
\\ \lambda{r}_{x} & 0
\end{bmatrix}$ is self adjoint for any $\lambda \in \mathbb{T}$, by \cite[Theorem~3.3.6]{Mu},
we obtain
\begin{align}\label{T.2.0302}
\left\|\begin{bmatrix}
0 & \overline{\lambda}{l}_{x}
\\ \lambda{r}_{x} & 0
\end{bmatrix}\right\|= \displaystyle{\sup_{\varphi \in \mathcal{S}(\mathbb{L}(\mathscr{V}))}}{\left|\varphi\left(\begin{bmatrix}
0 & \overline{\lambda}{l}_{x}
\\ \lambda{r}_{x} & 0
\end{bmatrix}\right)\right|}.
\end{align}
Therefore,
\begin{align*}
\displaystyle{\sup_{\lambda \in \mathbb{T}}}{\left\|\begin{bmatrix}
0 & \overline{\lambda}{l}_{x}
\\ \lambda{r}_{x} & 0
\end{bmatrix}\right\|} &\stackrel{\eqref{T.2.0302}}{=} \displaystyle{\sup_{\lambda \in \mathbb{T}}}{\displaystyle{\sup_{\varphi \in \mathcal{S}(\mathbb{L}(\mathscr{V}))}}{\left|\varphi\left(\begin{bmatrix}
0 & \overline{\lambda}{l}_{x}
\\ \lambda{r}_{x} & 0
\end{bmatrix}\right)\right|}}
\\& = 2\,\displaystyle{\sup_{\lambda \in \mathbb{T}}}{\displaystyle{\sup_{\varphi \in \mathcal{S}(\mathbb{L}(\mathscr{V}))}}{\left|\varphi\left(\,{\rm Re}\left(\lambda\begin{bmatrix}
0 & 0
\\ {r}_{x} & 0
\end{bmatrix}\right)\right)\right|}}
\\& = 2\,\displaystyle{\sup_{\lambda \in \mathbb{T}}}{\displaystyle{\sup_{\varphi \in \mathcal{S}(\mathbb{L}(\mathscr{V}))}}{\left|\,{\rm Re}\left(\lambda\varphi\left(\begin{bmatrix}
0 & 0
\\ {r}_{x} & 0
\end{bmatrix}\right)\right)\right|}}
\\& = 2{\displaystyle{\sup_{\varphi \in \mathcal{S}(\mathbb{L}(\mathscr{V}))}}\displaystyle{\sup_{\lambda \in \mathbb{T}}}{\left|\,{\rm Re}\left(\lambda\varphi\left(\begin{bmatrix}
0 & 0
\\ {r}_{x} & 0
\end{bmatrix}\right)\right)\right|}}
\\& \stackrel{\eqref{T.2.0301}}{=} 2{\displaystyle{\sup_{\varphi \in \mathcal{S}(\mathbb{L}(\mathscr{V}))}}}\left|\varphi\left(\begin{bmatrix}
0 & 0
\\ {r}_{x} & 0
\end{bmatrix}\right)\right| = 2\Omega(x).
\end{align*}
Thus
\begin{align*}
\frac{1}{2}\displaystyle{\sup_{\lambda \in \mathbb{T}}}{\left\|\begin{bmatrix}
0 & \overline{\lambda}{l}_{x}
\\ \lambda{r}_{x} & 0
\end{bmatrix}\right\|} = \Omega(x).
\end{align*}
\end{proof}
We can obtain a refinement of inequality (\ref{T.2.2.I.3}) as follows.
\begin{theorem}\label{T.2.3}
Let $\mathscr{V}$ be a Hilbert $\mathscr{A}$-module
and let $\mathbb{L}(\mathscr{V})$ be the linking algebra of $\mathscr{V}$.
For $x\in \mathscr{V}$ the following inequality holds:
\begin{align*}
\frac{1}{8}\Big(4{\|x\|} + 2|\Gamma - \Gamma'| + \Delta + \Delta'\Big)\leq \Omega(x),
\end{align*}
where $\Gamma = \max\left\{{\|x\|}, {\left\|\begin{bmatrix}
0 & {l}_{x}
\\ {r}_{x} & 0
\end{bmatrix}\right\|}\right\}$,
$\Gamma' = \max\left\{{\|x\|}, {\left\|\begin{bmatrix}
0 & -{l}_{x}
\\ {r}_{x} & 0
\end{bmatrix}\right\|}\right\}$,
$\Delta = \left|{\|x\|} -{\left\|\begin{bmatrix}
0 & {l}_{x}
\\ {r}_{x} & 0
\end{bmatrix}\right\|}\right|$
and
$\Delta' = \left|{\|x\|} -{\left\|\begin{bmatrix}
0 & -{l}_{x}
\\ {r}_{x} & 0
\end{bmatrix}\right\|}\right|$.
\end{theorem}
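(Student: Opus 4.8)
The plan is to reduce the claim to a handful of elementary scalar inequalities—two of which are recorded verbatim in the proof of Theorem~\ref{T.2.2}, one more obtained there en route to (\ref{T.2.2.I.3})—and then to combine them purely algebraically by means of the identity $2\max\{a,b\}=a+b+|a-b|$. To lighten the notation, write $u={\|x\|}_{_{\mathscr{V}}}$ and let $p$ and $q$ denote the two linking-algebra norms occurring in the definitions of $(\Gamma,\Delta)$ and $(\Gamma',\Delta')$, so that $\Gamma=\max\{u,p\}$, $\Gamma'=\max\{u,q\}$, $\Delta=|u-p|$, and $\Delta'=|u-q|$.

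First I would assemble the four inputs. The inequalities (\ref{T.2.2.I.2}) give $2\Omega(x)\ge p$ and $2\Omega(x)\ge q$, while Theorem~\ref{T.2.2} supplies $2\Omega(x)\ge u$. In addition, the triangle inequality applied to the two matrices appearing in (\ref{T.2.2.I.2})—whose sum is $\begin{bmatrix}0&0\\2{r}_{x}&0\end{bmatrix}$, of norm $2u$—yields $p+q\ge 2u$; this is exactly the estimate that carries (\ref{T.2.2.I.2}) into (\ref{T.2.2.I.3}).

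Combining $2\Omega(x)\ge p$ with $2\Omega(x)\ge u$ gives $2\Omega(x)\ge\max\{u,p\}=\Gamma$, and in the same way $2\Omega(x)\ge\Gamma'$; hence $2\Omega(x)\ge\max\{\Gamma,\Gamma'\}$. Multiplying by $4$ and invoking $2\max\{\Gamma,\Gamma'\}=\Gamma+\Gamma'+|\Gamma-\Gamma'|$ produces
\begin{align*}
8\Omega(x)\ge 2\Gamma+2\Gamma'+2|\Gamma-\Gamma'|.
\end{align*}

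Finally I would unwind the remaining two maxima with the same identity, namely $2\Gamma=u+p+\Delta$ and $2\Gamma'=u+q+\Delta'$, so that $2\Gamma+2\Gamma'=2u+(p+q)+(\Delta+\Delta')$. Inserting the bound $p+q\ge 2u$ to replace the term $(p+q)$ then gives $8\Omega(x)\ge 4u+2|\Gamma-\Gamma'|+\Delta+\Delta'$, which is the assertion. I do not anticipate a genuine obstacle: all four ingredients are already in hand, and the only care needed is bookkeeping the $\max$/absolute-value identities and inserting $p+q\ge 2u$ at the correct step. The one conceptual point worth stressing is that, in contrast with Theorem~\ref{T.2.2}, this refinement exploits all three lower bounds $2\Omega(x)\ge p$, $2\Omega(x)\ge q$, and $2\Omega(x)\ge u$ simultaneously rather than $2\Omega(x)\ge u$ alone.
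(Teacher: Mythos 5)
Your proposal is correct and follows essentially the same route as the paper: both start from $2\Omega(x)\ge\max\{\Gamma,\Gamma'\}$ (combining (\ref{T.2.2.I.2}) with (\ref{T.2.2.I.3})), unwind all three maxima via the identity $2\max\{a,b\}=a+b+|a-b|$, and conclude by inserting the triangle-inequality estimate $p+q\ge 2{\|x\|}_{_{\mathscr{V}}}$ for the two off-diagonal matrix norms. The only difference is cosmetic bookkeeping: you clear denominators and work with $8\Omega(x)$, where the paper carries factors of $\tfrac{1}{8}$ through the same chain of estimates.
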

\begin{proof}
Since
$\Omega(x)= \frac{1}{2}\displaystyle{\sup_{\lambda \in \mathbb{T}}}{\left\|\begin{bmatrix}
0 & \overline{\lambda}{l}_{x}
\\ \lambda{r}_{x} & 0
\end{bmatrix}\right\|}$, by taking $\lambda = 1$ and $\lambda = i$, we have
\begin{align}\label{T.2.2.I.2}
\Omega(x) \geq \frac{1}{2}{\left\|\begin{bmatrix}
0 & {l}_{x}
\\ {r}_{x} & 0
\end{bmatrix}\right\|}
\qquad \mbox{and} \qquad
\Omega(x) \geq \frac{1}{2}{\left\|\begin{bmatrix}
0 & -{l}_{x}
\\ {r}_{x} & 0
\end{bmatrix}\right\|}.
\end{align}
So, by \eqref{T.2.2.I.3} and \eqref{T.2.2.I.2} we have $\Omega(x) \geq \frac{1}{2}\max\{\Gamma, \Gamma'\}$.
Therefore,
\begin{align*}
\Omega(x) & \geq \frac{\Gamma + \Gamma'}{4} + \frac{|\Gamma - \Gamma'|}{4}
\\& = \frac{1}{4}\left(\frac{1}{2}\left({\|x\|}
+ {\left\|\begin{bmatrix}
0 & {l}_{x}
\\ {r}_{x} & 0
\end{bmatrix}\right\|}\right) + \frac{1}{2}\Delta\right)
\\& \qquad \qquad \qquad + \frac{1}{4}\left(\frac{1}{2}\left({\|x\|}
+ {\left\|\begin{bmatrix}
0 & -{l}_{x}
\\ {r}_{x} & 0
\end{bmatrix}\right\|}\right) + \frac{1}{2}\Delta'\right) + \frac{|\Gamma - \Gamma'|}{4}
\\& = \frac{1}{8}\left({\left\|\begin{bmatrix}
0 & {l}_{x}
\\ {r}_{x} & 0
\end{bmatrix}\right\|} + {\left\|\begin{bmatrix}
0 & -{l}_{x}
\\ {r}_{x} & 0
\end{bmatrix}\right\|}\right)
+ \frac{1}{4}{\|x\|} + \frac{\Delta + \Delta'}{8} + \frac{|\Gamma - \Gamma'|}{4}
\\& \geq \frac{1}{8}{\left\|\begin{bmatrix}
0 & {l}_{x}
\\ {r}_{x} & 0
\end{bmatrix} + \begin{bmatrix}
0 & -{l}_{x}
\\ {r}_{x} & 0
\end{bmatrix}\right\|}
+ \frac{1}{4}{\|x\|} + \frac{\Delta + \Delta'}{8} + \frac{|\Gamma - \Gamma'|}{4}
\\& = \frac{1}{4}{\left\|\begin{bmatrix}
0 & 0
\\ {r}_{x} & 0
\end{bmatrix}\right\|}
+ \frac{1}{4}{\|x\|} + \frac{\Delta + \Delta'}{8} + \frac{|\Gamma - \Gamma'|}{4}
\\& = \frac{1}{4}{\|x\|} + \frac{1}{4}{\|x\|} + \frac{\Delta + \Delta'}{8} + \frac{|\Gamma - \Gamma'|}{4}
\\& = \frac{1}{2}{\|x\|} + \frac{\Delta + \Delta'}{8} + \frac{|\Gamma - \Gamma'|}{4}.
\end{align*}
Thus
\begin{align*}
\frac{1}{2}{\|x\|} + \frac{\Delta + \Delta'}{8} + \frac{|\Gamma - \Gamma'|}{4} \leq \Omega(x).
\end{align*}
\end{proof}
In the following result, we state a necessary and sufficient condition
for the equality case in the inequality (\ref{T.2.2.I.3}).
\begin{corollary}\label{C.2.4}
Let $\mathscr{V}$ be a Hilbert $\mathscr{A}$-module
and let $\mathbb{L}(\mathscr{V})$ be the linking algebra of $\mathscr{V}$.
Let $x\in \mathscr{V}$. Then $\Omega(x) = \frac{1}{2}{\|x\|}$ if and only if
${\|x\|} = {\left\|\begin{bmatrix}
0 & \overline{\lambda}{l}_{x}
\\ \lambda{r}_{x} & 0
\end{bmatrix}\right\|}$ for all $\lambda \in \mathbb{T}$.
\end{corollary}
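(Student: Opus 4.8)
The plan is to prove the two implications separately, with almost all of the work in the forward one. Write $M_\lambda := \begin{bmatrix} 0 & \overline{\lambda}{l}_{x} \\ \lambda{r}_{x} & 0 \end{bmatrix}$, so that by Definition~\ref{D.2.1} one has $\Omega(x) = \frac{1}{2}\sup_{\lambda \in \mathbb{T}}{\|M_\lambda\|}_{_{\mathbb{L}(\mathscr{V})}}$. The reverse implication is immediate: if ${\|M_\lambda\|}_{_{\mathbb{L}(\mathscr{V})}} = {\|x\|}_{_{\mathscr{V}}}$ for every $\lambda\in\mathbb{T}$, then the defining supremum is a supremum of the constant ${\|x\|}_{_{\mathscr{V}}}$, giving $\Omega(x) = \frac{1}{2}{\|x\|}_{_{\mathscr{V}}}$.

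For the forward implication, assume $\Omega(x) = \frac{1}{2}{\|x\|}_{_{\mathscr{V}}}$. First I would record the one-sided bound ${\|M_\mu\|}_{_{\mathbb{L}(\mathscr{V})}} \le {\|x\|}_{_{\mathscr{V}}}$ valid for every $\mu\in\mathbb{T}$: indeed $\frac{1}{2}{\|M_\mu\|}_{_{\mathbb{L}(\mathscr{V})}} \le \frac{1}{2}\sup_{\nu}{\|M_\nu\|}_{_{\mathbb{L}(\mathscr{V})}} = \Omega(x) = \frac{1}{2}{\|x\|}_{_{\mathscr{V}}}$. The substantive step is then a corner-isolation argument generalizing the passage from (\ref{T.2.2.I.2}) to (\ref{T.2.2.I.3}): fix an arbitrary $\lambda\in\mathbb{T}$ and pair $M_\lambda$ with $M_{i\lambda}$. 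Using $\overline{i\lambda} = -i\overline{\lambda}$, a direct computation gives $M_\lambda + (-i)M_{i\lambda} = \begin{bmatrix} 0 & 0 \\ 2\lambda{r}_{x} & 0 \end{bmatrix}$, whose norm equals $2{\|x\|}_{_{\mathscr{V}}}$ since a lower corner $\begin{bmatrix} 0 & 0 \\ T & 0 \end{bmatrix}$ has norm $\|T\|$ and $x\mapsto{r}_{x}$ is isometric. The triangle inequality, together with ${\|(-i)M_{i\lambda}\|}_{_{\mathbb{L}(\mathscr{V})}} = {\|M_{i\lambda}\|}_{_{\mathbb{L}(\mathscr{V})}}$, then yields $2{\|x\|}_{_{\mathscr{V}}} \le {\|M_\lambda\|}_{_{\mathbb{L}(\mathscr{V})}} + {\|M_{i\lambda}\|}_{_{\mathbb{L}(\mathscr{V})}}$.

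Combining the two steps finishes the proof: since both summands are at most ${\|x\|}_{_{\mathscr{V}}}$ by the first step, the last inequality forces ${\|M_\lambda\|}_{_{\mathbb{L}(\mathscr{V})}} = {\|M_{i\lambda}\|}_{_{\mathbb{L}(\mathscr{V})}} = {\|x\|}_{_{\mathscr{V}}}$, and since $\lambda$ was arbitrary the condition holds for all $\lambda\in\mathbb{T}$. I expect the one genuine obstacle to be the choice of companion in the corner-isolation step: one must see that $M_{i\lambda}$ scaled by the unimodular factor $-i$ is exactly what cancels the upper-right corner and doubles the lower-left one, uniformly in $\lambda$, so that the $\lambda = 1, i$ computation underlying Theorem~\ref{T.2.2} runs for every $\lambda$. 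As an aside, one could shortcut the forward direction by noting that the diagonal unitary $\mathrm{diag}(\overline{\lambda},\lambda)$ carries $M_1$ to $M_\lambda$, so ${\|M_\lambda\|}_{_{\mathbb{L}(\mathscr{V})}}$ is in fact independent of $\lambda$; the corner-isolation route, however, keeps the argument within the toolkit already developed above.
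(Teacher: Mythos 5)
Your proof is correct, and it takes a genuinely different (more self-contained) route than the paper's. The paper deduces the forward implication from Theorem \ref{T.2.3}: if $\Omega(x)=\frac{1}{2}\|x\|_{\mathscr{V}}$, then by homogeneity $\Omega(\lambda x)=\frac{1}{2}\|\lambda x\|_{\mathscr{V}}$ for each $\lambda\in\mathbb{T}$, and the lower bound of Theorem \ref{T.2.3} applied to the element $\lambda x$ forces its defect term $\Delta$ to vanish, which, via $l_{\lambda x}=\overline{\lambda}l_x$ and $r_{\lambda x}=\lambda r_x$, is exactly the desired identity $\|x\|_{\mathscr{V}}=\|M_\lambda\|_{\mathbb{L}(\mathscr{V})}$ in your notation. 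You instead rerun the corner-cancellation step of Theorem \ref{T.2.2} (the passage from (\ref{T.2.2.I.2}) to (\ref{T.2.2.I.3})) directly at the pair $(\lambda,i\lambda)$: your identity $M_\lambda+(-i)M_{i\lambda}=\begin{bmatrix} 0 & 0 \\ 2\lambda r_x & 0 \end{bmatrix}$ is correct (since $\overline{i\lambda}=-i\overline{\lambda}$), and the triangle inequality together with the upper bound $\|M_\mu\|_{\mathbb{L}(\mathscr{V})}\le 2\Omega(x)=\|x\|_{\mathscr{V}}$ does force both summands to equal $\|x\|_{\mathscr{V}}$. The two arguments share the same kernel --- unwinding the paper's application of Theorem \ref{T.2.3} to $\lambda x$ produces precisely your matrices $M_\lambda$ and $M_{i\lambda}$ --- but the paper's version is shorter on the page because the refinement theorem has already been proved, while yours needs only the triangle inequality and the isometry of $x\mapsto r_x$, so it would stand even if Theorem \ref{T.2.3} were removed. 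Your closing aside is in fact sharper than anything the paper exploits: left multiplication by the unitary $\mathrm{diag}(\overline{\lambda},\lambda)\in\mathbb{B}(\mathscr{A}\oplus\mathscr{V})$ carries $M_1$ to $M_\lambda$, so $\|M_\lambda\|_{\mathbb{L}(\mathscr{V})}$ is constant in $\lambda$, making the quantifier ``for all $\lambda\in\mathbb{T}$'' (and the supremum in Definition \ref{D.2.1}) redundant; moreover, combining this with the self-adjointness of $M_1$ and the identities $l_xr_x=T_{\langle x,x\rangle}$, $r_xl_x=\theta_{x,x}$ from the proof of Theorem \ref{T.2.9} gives $\|M_1\|^2_{\mathbb{L}(\mathscr{V})}=\|M_1^2\|_{\mathbb{L}(\mathscr{V})}=\max\big\{\|T_{\langle x,x\rangle}\|,\|\theta_{x,x}\|\big\}=\|x\|^2_{\mathscr{V}}$, so both sides of the corollary actually hold for every $x$ --- an observation worth pursuing, since it shows $\Omega(\cdot)=\frac{1}{2}\|\cdot\|_{\mathscr{V}}$ identically, though it lies outside what either proof of the stated equivalence requires.
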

\begin{proof}
Let us first suppose that $\Omega(x) = \frac{1}{2}{\|x\|}$.
For every $\lambda \in \mathbb{T}$ then we have $\Omega(\lambda x) = \frac{1}{2}{\|\lambda x\|}$.
Therefore, by Theorem \ref{T.2.3}, we obtain
\begin{align*}
\Delta = \left|{\|\lambda x\|} -{\left\|\begin{bmatrix}
0 & {l}_{\lambda x}
\\ {r}_{\lambda x} & 0
\end{bmatrix}\right\|}\right| = 0.
\end{align*}
From this it follows that ${\|x\|} = {\left\|\begin{bmatrix}
0 & \overline{\lambda}{l}_{x}
\\ \lambda{r}_{x} & 0
\end{bmatrix}\right\|}$.

Conversely, if ${\|x\|} = {\left\|\begin{bmatrix}
0 & \overline{\lambda}{l}_{x}
\\ \lambda{r}_{x} & 0
\end{bmatrix}\right\|}$ for all $\lambda \in \mathbb{T}$, then
\begin{align*}
\frac{1}{2}\displaystyle{\sup_{\lambda \in \mathbb{T}}}{\left\|\begin{bmatrix}
0 & \overline{\lambda}{l}_{x}
\\ \lambda{r}_{x} & 0
\end{bmatrix}\right\|} = \frac{1}{2}{\|x\|},
\end{align*}
and so, by Theorem \ref{T.2.03}, $\Omega(x) = \frac{1}{2}{\|x\|}$.
\end{proof}
For every $a\in \mathscr{A}$ and $x\in\mathscr{V}$, by the inequalities (\ref{T.2.2.I.1}) and (\ref{T.2.2.I.3}), we have
\begin{align*}
\Omega(xa + xa^*) \leq {\|xa + xa^*\|} \leq 2 {\|a\|}{\|x\|}
\leq 4{\|a\|}\Omega(x),
\end{align*}
and hence
\begin{align}\label{T.2.5.I.1}
\Omega(xa + xa^*) \leq 4{\|a\|}\Omega(x).
\end{align}
In the following theorem, we improve the inequality (\ref{T.2.5.I.1}).
\begin{theorem}\label{T.2.5}
Let $\mathscr{V}$ be a Hilbert $\mathscr{A}$-module.
Let $a\in \mathscr{A}$ and $x\in\mathscr{V}$. Then
\begin{align*}
\Omega(xa + xa^*) \leq 2{\|a + a^*\|} \Omega(x).
\end{align*}
\end{theorem}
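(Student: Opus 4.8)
The plan is to exhibit the matrix $M_\lambda(xa+xa^*)$ inside the linking algebra as an anticommutator $P\,M_\lambda(x) + M_\lambda(x)\,P$ for a suitable self-adjoint corner element $P$, and then combine the triangle inequality, submultiplicativity of the $C^*$-norm, and the definition of $\Omega$. Throughout I abbreviate $M_\lambda(y) := \begin{bmatrix} 0 & \overline{\lambda}\,l_y \\ \lambda\,r_y & 0 \end{bmatrix}$ for $y \in \mathscr{V}$ and $\lambda \in \mathbb{T}$, so that $\Omega(y) = \tfrac{1}{2}\sup_{\lambda\in\mathbb{T}}\|M_\lambda(y)\|_{\mathbb{L}(\mathscr{V})}$.

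First I would record how $r$ and $l$ transform under the module action. Associativity $(xa)b = x(ab)$ gives $r_{xa} = r_x T_a$; taking adjoints and using $r_x^* = l_x$ together with $T_a^* = T_{a^*}$ (as $a \mapsto T_a$ is a $*$-isomorphism) yields $l_{xa} = T_{a^*}\,l_x$. Replacing $a$ by $a^*$, adding, and setting $s := a + a^*$ (so that $T_s = T_a + T_{a^*}$ is self-adjoint) gives
\[
r_{xa+xa^*} = r_x\,T_s \qquad\text{and}\qquad l_{xa+xa^*} = T_s\,l_x .
\]
The symmetric combination $xa+xa^*$ is chosen precisely so that the \emph{same} element $s$ multiplies $r_x$ on the right and $l_x$ on the left.

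Next I would set $P := \begin{bmatrix} T_s & 0 \\ 0 & 0\end{bmatrix} \in \mathbb{L}(\mathscr{V})$, the corner embedding of $T_s$; since this embedding is an isometric $*$-homomorphism, $\|P\|_{\mathbb{L}(\mathscr{V})} = \|T_s\|_{\mathbb{K}(\mathscr{A})} = \|a+a^*\|_{\mathscr{A}}$. A direct block computation then shows, for every $\lambda \in \mathbb{T}$,
\[
M_\lambda(xa+xa^*) = P\,M_\lambda(x) + M_\lambda(x)\,P ,
\]
because multiplying $M_\lambda(x)$ by $P$ on the left produces the $(1,2)$-entry $\overline{\lambda}\,T_s\,l_x$, while multiplying on the right produces the $(2,1)$-entry $\lambda\,r_x\,T_s$, and these match the two nonzero entries of $M_\lambda(xa+xa^*)$ by the transformation rules above.

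Finally, the triangle inequality and submultiplicativity in the $C^*$-algebra $\mathbb{L}(\mathscr{V})$ give, for each $\lambda$,
\[
\|M_\lambda(xa+xa^*)\|_{\mathbb{L}(\mathscr{V})} \leq 2\,\|P\|_{\mathbb{L}(\mathscr{V})}\,\|M_\lambda(x)\|_{\mathbb{L}(\mathscr{V})} = 2\,\|a+a^*\|_{\mathscr{A}}\,\|M_\lambda(x)\|_{\mathbb{L}(\mathscr{V})} .
\]
Taking the supremum over $\lambda \in \mathbb{T}$ and multiplying by $\tfrac{1}{2}$ yields $\Omega(xa+xa^*) \leq 2\,\|a+a^*\|_{\mathscr{A}}\,\Omega(x)$, as claimed. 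I expect the only delicate point to be the bookkeeping in the first step: keeping the conjugations straight so that $l_{xa} = T_{a^*}l_x$ while $r_{xa} = r_x T_a$, and noticing that self-adjointness of $s = a+a^*$ is exactly what forces both corners to carry the identical factor $T_s$, collapsing the expression to the anticommutator $P\,M_\lambda(x) + M_\lambda(x)\,P$ rather than an asymmetric product with distinct left and right factors.
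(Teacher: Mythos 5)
Your proof is correct and follows essentially the same route as the paper's: both establish $r_{xa} = r_x T_a$ and $l_{xa} = T_{a^*} l_x$, write $M_\lambda(xa+xa^*)$ as the anticommutator of $M_\lambda(x)$ with the corner element $\begin{bmatrix} T_{a+a^*} & 0 \\ 0 & 0 \end{bmatrix}$, and conclude via the triangle inequality, submultiplicativity, and the supremum over $\lambda \in \mathbb{T}$. The only cosmetic difference is that you obtain $l_{xa} = T_{a^*} l_x$ by taking adjoints in $r_{xa} = r_x T_a$, whereas the paper verifies it directly from the inner product; both are valid.
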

\begin{proof}
Let $\mathbb{L}(\mathscr{V})$ be the linking algebra of $\mathscr{V}$.
For every $b\in \mathscr{A}$ and $y\in\mathscr{V}$, we have
\begin{align*}
{r}_{xa}(b) = (xa)b = x(ab) = x(T_{a}(b)) = {r}_{x} T_{a}(b)
\end{align*}
and
\begin{align*}
{l}_{xa}(y) = \langle xa, y\rangle = a^*\langle x, y\rangle = a^*({l}_{x}(y)) = T_{a^*}{l}_{x}(y).
\end{align*}
Hence ${r}_{xa} = {r}_{x} T_{a}$ and ${l}_{xa} = T_{a^*}{l}_{x}$.
Now, let $\lambda \in \mathbb{T}$. Therefore,
\begin{align*}
{\left\|\begin{bmatrix}
0 & \overline{\lambda}{l}_{(xa + xa^*)}
\\ \lambda{r}_{(xa + xa^*)} & 0
\end{bmatrix}\right\|}
& = {\left\|\begin{bmatrix}
0 & \overline{\lambda}(T_{a^*}{l}_{x} + T_{a}{l}_{x})
\\ \lambda({r}_{x} T_{a} + {r}_{x} T_{a^*}) & 0
\end{bmatrix}\right\|}
\\& = {\left\|\begin{bmatrix}
0 & \overline{\lambda}T_{a + a^*}{l}_{x}
\\ \lambda{r}_{x}T_{a + a^*} & 0
\end{bmatrix}\right\|}
\\& = {\left\|\begin{bmatrix}
0 & \overline{\lambda}{l}_{x}
\\ \lambda{r}_{x} & 0
\end{bmatrix}\begin{bmatrix}
T_{a + a^*} & 0
\\ 0 & 0
\end{bmatrix}
+ \begin{bmatrix}
T_{a + a^*} & 0
\\ 0 & 0
\end{bmatrix}\begin{bmatrix}
0 & \overline{\lambda}{l}_{x}
\\ \lambda{r}_{x} & 0
\end{bmatrix}\right\|}
\\& \leq 2 {\left\|\begin{bmatrix}
T_{a + a^*} & 0
\\ 0 & 0
\end{bmatrix}\right\|}
{\left\|\begin{bmatrix}
0 & \overline{\lambda}{l}_{x}
\\ \lambda{r}_{x} & 0
\end{bmatrix}\right\|}
\\& \leq 4{\|a + a^*\|} \Omega(x),
\end{align*}
and so
\begin{align*}
\frac{1}{2}{\left\|\begin{bmatrix}
0 & \overline{\lambda}{l}_{(xa + xa^*)}
\\ \lambda{r}_{(xa + xa^*)} & 0
\end{bmatrix}\right\|}
\leq 2{\|a + a^*\|} \Omega(x).
\end{align*}
Taking the supremum over $\lambda \in \mathbb{T}$ in the above inequality, we deduce that
\begin{align*}
\Omega(xa + xa^*) \leq 2{\|a + a^*\|} \Omega(x).
\end{align*}
\end{proof}
As an immediate consequence of Theorem \ref{T.2.5}, we have the following result.
\begin{corollary}\label{C.2.6}
Let $\mathscr{V}$ be a Hilbert $\mathscr{A}$-module
and let $a\in \mathscr{A}$ and $x\in\mathscr{V}$. If $xa = xa^*$, then
\begin{align*}
\Omega(xa) \leq {\|a + a^*\|} \Omega(x).
\end{align*}
\end{corollary}
\begin{remark}\label{R.2.7}
Let $\mathscr{V}$ be a Hilbert $\mathscr{A}$-module
and let $a\in \mathscr{A}$ and $x\in\mathscr{V}$.
Replace $a$ by $ia$ in Theorem \ref{T.2.5}, to obtain
$\Omega(xa - xa^*) \leq 2{\|a - a^*\|} \Omega(x)$.
Thus
\begin{align*}
\Omega(xa \pm xa^*) \leq 2{\|a \pm a^*\|} \Omega(x).
\end{align*}
\end{remark}
In what follows, $r(a)$ stands for the spectral radius of
an arbitrary element $a$ in a $C^*$-algebra $\mathscr{A}$.
It is well known that for every $a\in \mathscr{A}$, we
have $r(a) \leq {\|a\|}$ and that equality holds in this inequality if $a$ is normal.
The following lemma gives us a spectral radius inequality for sums of elements in $C^*$-algebras.
\begin{lemma}\cite[Lemma 3.5]{Z}\label{L.2.8}
Let $\mathscr{A}$ be a $C^*$-algebra and let $a, b\in \mathscr{A}$. Then
\begin{align*}
r(a + b) \leq {\left\|\begin{bmatrix}
{\|a\|} & {\|ab\|}^{1/2}
\\ {\|ab\|}^{1/2} & {\|b\|}
\end{bmatrix}\right\|}.
\end{align*}
\end{lemma}
Now, we present a refinement of the triangle inequality
for the numerical radius in Hilbert $C^*$-modules.
We use some ideas of \cite[Theorem 3.4]{A.K.1}.
We refer the reader to \cite{N.W, E.M.P, Po, Raji.3} for more
information on the triangle inequality.
\begin{theorem}\label{T.2.9}
Let $\mathscr{V}$ be a Hilbert $\mathscr{A}$-module
and let $\mathbb{L}(\mathscr{V})$ be the linking algebra of $\mathscr{V}$.
Let $x, y\in \mathscr{V}$. Then
\begin{align*}
\Omega(x + y)
\leq {\left\|\begin{bmatrix}
\Omega(x) & \frac{1}{2}{\left\|\begin{bmatrix}
T_{\langle x, y\rangle} & 0
\\ 0 & \theta_{x, y}
\end{bmatrix}\right\|}^{1/2}
\\ \frac{1}{2}{\left\|\begin{bmatrix}
T_{\langle x, y\rangle} & 0
\\ 0 & \theta_{x, y}
\end{bmatrix}\right\|}^{1/2} & \Omega(y)
\end{bmatrix}\right\|}
\leq \Omega(x) + \Omega(y).
\end{align*}
\end{theorem}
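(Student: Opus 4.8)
The plan is to push both inequalities through the spectral-radius estimate of Lemma \ref{L.2.8}, applied inside the $C^*$-algebra $\mathbb{L}(\mathscr{V})$. For $\lambda\in\mathbb{T}$ set
\[
X_\lambda=\begin{bmatrix}0 & \overline{\lambda}{l}_{x}\\ \lambda{r}_{x} & 0\end{bmatrix},
\qquad
Y_\lambda=\begin{bmatrix}0 & \overline{\lambda}{l}_{y}\\ \lambda{r}_{y} & 0\end{bmatrix}.
\]
First I would record two structural facts. Since ${l}_{x+y}={l}_{x}+{l}_{y}$ and ${r}_{x+y}={r}_{x}+{r}_{y}$, the matrix defining $\Omega(x+y)$ is exactly $X_\lambda+Y_\lambda$, so $\Omega(x+y)=\tfrac{1}{2}\sup_{\lambda\in\mathbb{T}}\|X_\lambda+Y_\lambda\|_{_{\mathbb{L}(\mathscr{V})}}$. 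Moreover, using ${r}_{x}^{*}={l}_{x}$ one checks that $X_\lambda$ and $Y_\lambda$ are self-adjoint, hence $X_\lambda+Y_\lambda$ is self-adjoint and normal, so $\|X_\lambda+Y_\lambda\|_{_{\mathbb{L}(\mathscr{V})}}=R_{_{\mathbb{L}(\mathscr{V})}}(X_\lambda+Y_\lambda)$.

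Next I would apply Lemma \ref{L.2.8} with $a=X_\lambda$ and $b=Y_\lambda$, which gives
\[
R_{_{\mathbb{L}(\mathscr{V})}}(X_\lambda+Y_\lambda)\leq
{\left\|\begin{bmatrix}\|X_\lambda\|_{_{\mathbb{L}(\mathscr{V})}} & \|X_\lambda Y_\lambda\|^{1/2}_{_{\mathbb{L}(\mathscr{V})}}\\[2pt] \|X_\lambda Y_\lambda\|^{1/2}_{_{\mathbb{L}(\mathscr{V})}} & \|Y_\lambda\|_{_{\mathbb{L}(\mathscr{V})}}\end{bmatrix}\right\|}_{_{\mathbb{M}_2(\mathbb{C})}}.
\]
The decisive simplification is that the off-diagonal term does not depend on $\lambda$: a direct block multiplication, using $|\lambda|=1$, yields
\[
X_\lambda Y_\lambda=\begin{bmatrix}{l}_{x}{r}_{y} & 0\\ 0 & {r}_{x}{l}_{y}\end{bmatrix}=\begin{bmatrix}T_{\langle x,y\rangle} & 0\\ 0 & \theta_{x,y}\end{bmatrix},
\]
since ${l}_{x}{r}_{y}(a)=\langle x,ya\rangle=\langle x,y\rangle a=T_{\langle x,y\rangle}(a)$ and ${r}_{x}{l}_{y}(z)=x\langle y,z\rangle=\theta_{x,y}(z)$. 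Write $M:=\|X_\lambda Y_\lambda\|_{_{\mathbb{L}(\mathscr{V})}}$, a constant in $\lambda$, so that the claimed off-diagonal entry is $c=\tfrac{1}{2}\sqrt{M}$.

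For the first inequality I would invoke the elementary monotonicity of the norm of a symmetric $2\times2$ matrix with nonnegative entries, namely ${\left\|\begin{bmatrix}p & q\\ q & r\end{bmatrix}\right\|}_{_{\mathbb{M}_2(\mathbb{C})}}=\tfrac{p+r}{2}+\sqrt{\big(\tfrac{p-r}{2}\big)^2+q^2}$, which is nondecreasing in each of $p,q,r\geq0$. Since $\|X_\lambda\|_{_{\mathbb{L}(\mathscr{V})}}\leq 2\Omega(x)$ and $\|Y_\lambda\|_{_{\mathbb{L}(\mathscr{V})}}\leq 2\Omega(y)$ for every $\lambda$, this monotonicity converts the previous bound into one that is uniform in $\lambda$, with $2\Omega(x),2\Omega(y)$ on the diagonal and $\sqrt{M}$ off the diagonal; taking the supremum over $\lambda$, multiplying by $\tfrac{1}{2}$, and pulling the scalar through the matrix norm delivers the left-hand inequality. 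I expect this monotonicity-plus-uniformity step to be the main obstacle, since one must not interchange the supremum over $\lambda$ with the $2\times2$ matrix norm naively; the constancy of $M$ together with the entrywise monotonicity is what makes the exchange legitimate.

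For the right-hand inequality, the explicit norm formula reduces ${\left\|\begin{bmatrix}\Omega(x) & c\\ c & \Omega(y)\end{bmatrix}\right\|}_{_{\mathbb{M}_2(\mathbb{C})}}\leq\Omega(x)+\Omega(y)$ to the single scalar condition $c^2\leq\Omega(x)\Omega(y)$. Here $M=\max\{\|T_{\langle x,y\rangle}\|_{_{\mathbb{L}(\mathscr{V})}},\|\theta_{x,y}\|_{_{\mathbb{L}(\mathscr{V})}}\}=\max\{\|\langle x,y\rangle\|_{_{\mathscr{A}}},\|\theta_{x,y}\|_{_{\mathbb{K}(\mathscr{V})}}\}\leq\|x\|_{_{\mathscr{V}}}\|y\|_{_{\mathscr{V}}}$, so $c^2=\tfrac{1}{4}M\leq\tfrac{1}{4}\|x\|_{_{\mathscr{V}}}\|y\|_{_{\mathscr{V}}}$; combining this with the lower bound $\Omega(x)\geq\tfrac{1}{2}\|x\|_{_{\mathscr{V}}}$ from Theorem \ref{T.2.2} gives $\Omega(x)\Omega(y)\geq\tfrac{1}{4}\|x\|_{_{\mathscr{V}}}\|y\|_{_{\mathscr{V}}}\geq c^2$, which completes the proof.
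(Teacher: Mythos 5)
Your proposal is correct and follows essentially the same route as the paper: the same blocks $X_\lambda, Y_\lambda$, the same product computation $X_\lambda Y_\lambda=\begin{bmatrix} T_{\langle x,y\rangle} & 0\\ 0 & \theta_{x,y}\end{bmatrix}$, Lemma \ref{L.2.8} applied via self-adjointness of $X_\lambda+Y_\lambda$, and the entrywise monotonicity of the $2\times 2$ norm (your observation that $M$ is independent of $\lambda$ makes the supremum exchange cleaner than the paper's presentation). The only divergence is cosmetic: for the right-hand inequality the paper bounds $M=\|ab\|_{_{\mathbb{L}(\mathscr{V})}}\leq\|a\|_{_{\mathbb{L}(\mathscr{V})}}\|b\|_{_{\mathbb{L}(\mathscr{V})}}\leq 4\Omega(x)\Omega(y)$ directly by submultiplicativity, whereas you detour through $M\leq\|x\|_{_{\mathscr{V}}}\|y\|_{_{\mathscr{V}}}$ and the lower bound of Theorem \ref{T.2.2}; both are valid.
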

\begin{proof}
Let $\lambda \in \mathbb{T}$.
Put $a = \begin{bmatrix}
0 & \overline{\lambda}{l}_{x}
\\ \lambda{r}_{x} & 0
\end{bmatrix}$
and $b = \begin{bmatrix}
0 & \overline{\lambda}{l}_{y}
\\ \lambda{r}_{y} & 0
\end{bmatrix}$.
Then
\begin{align*}
{\|a\|} \leq 2\Omega(x) \qquad \mbox{and} \qquad{\|b\|} \leq 2\Omega(y).
\end{align*}
Also, for every $c\in\mathscr{A}$ and $z\in\mathscr{V}$, we have
\begin{align*}
{l}_{x}{r}_{y}(c) = {l}_{x}(yc) = \langle x, yc\rangle = \langle x, y\rangle c = T_{\langle x, y\rangle}(c)
\end{align*}
and
\begin{align*}
{r}_{x}{l}_{y}(z) = {r}_{x}(\langle y, z\rangle) = x\langle y, z\rangle = \theta_{x, y}(z).
\end{align*}
Thus ${l}_{x}{r}_{y} = T_{\langle x, y\rangle}$ and ${r}_{x}{l}_{y} = \theta_{x, y}$.
Therefore,
$ab = \begin{bmatrix}
T_{\langle x, y\rangle} & 0
\\ 0 & \theta_{x, y}
\end{bmatrix}$ and hence,
\begin{align}\label{T.2.9.I.1}
{\left\|\begin{bmatrix}
T_{\langle x, y\rangle} & 0
\\ 0 & \theta_{x, y}
\end{bmatrix}\right\|} = {\|ab\|}
\leq {\|a\|}{\|b\|}
\leq 4\Omega(x)\Omega(y).
\end{align}
Since $\begin{bmatrix}
0 & \overline{\lambda}{l}_{(x + y)}
\\ \lambda{r}_{(x + y)} & 0
\end{bmatrix}$ is a self adjoint element of $C^*$-algebra $\mathbb{L}(\mathscr{V})$,
we have
\begin{align*}
{\left\|\begin{bmatrix}
0 & \overline{\lambda}{l}_{(x + y)}
\\ \lambda{r}_{(x + y)} & 0
\end{bmatrix}\right\|}
= r\left(\begin{bmatrix}
0 & \overline{\lambda}{l}_{(x + y)}
\\ \lambda{r}_{(x + y)} & 0
\end{bmatrix}\right).
\end{align*}
Therefore, by Lemma \ref{L.2.8}, we obtain
\begin{align*}
{\left\|\begin{bmatrix}
0 & \overline{\lambda}{l}_{(x + y)}
\\ \lambda{r}_{(x + y)} & 0
\end{bmatrix}\right\|}
&= r\left(\begin{bmatrix}
0 & \overline{\lambda}{l}_{(x + y)}
\\ \lambda{r}_{(x + y)} & 0
\end{bmatrix}\right)
\\& =  r(a + b)
\\& \leq {\left\|\begin{bmatrix}
{\|a\|} & \|ab\|^{1/2}
\\ \|ab\|^{1/2} & {\|b\|}
\end{bmatrix}\right\|}.
\end{align*}
So, by the norm monotonicity of matrices with nonnegative entries
(see, e.g., \cite[p. 491]{H.J}), we get
\begin{align*}
{\left\|\begin{bmatrix}
0 & \overline{\lambda}{l}_{(x + y)}
\\ \lambda{r}_{(x + y)} & 0
\end{bmatrix}\right\|}
&\leq {\left\|\begin{bmatrix}
\displaystyle{\sup_{\lambda \in \mathbb{T}}}{\|a\|} & \displaystyle{\sup_{\lambda \in \mathbb{T}}}{\|ab\|}^{1/2}
\\ \displaystyle{\sup_{\lambda \in \mathbb{T}}}{\|ab\|}^{1/2} & \displaystyle{\sup_{\lambda \in \mathbb{T}}}{\|b\|}
\end{bmatrix}\right\|}
\\& = {\left\|\begin{bmatrix}
2\Omega(x) & {\left\|\begin{bmatrix}
T_{\langle x, y\rangle} & 0
\\ 0 & \theta_{x, y}
\end{bmatrix}\right\|}^{1/2}
\\ {\left\|\begin{bmatrix}
T_{\langle x, y\rangle} & 0
\\ 0 & \theta_{x, y}
\end{bmatrix}\right\|}^{1/2} & 2\Omega(y)
\end{bmatrix}\right\|}.
\end{align*}
Therefore, for every $\lambda \in \mathbb{T}$ we have
\begin{align*}
\frac{1}{2}{\left\|\begin{bmatrix}
0 & \overline{\lambda}{l}_{(x + y)}
\\ \lambda{r}_{(x + y)} & 0
\end{bmatrix}\right\|}
\leq {\left\|\begin{bmatrix}
\Omega(x) & \frac{1}{2}{\left\|\begin{bmatrix}
T_{\langle x, y\rangle} & 0
\\ 0 & \theta_{x, y}
\end{bmatrix}\right\|}^{1/2}
\\ \frac{1}{2}{\left\|\begin{bmatrix}
T_{\langle x, y\rangle} & 0
\\ 0 & \theta_{x, y}
\end{bmatrix}\right\|}^{1/2} & \Omega(y)
\end{bmatrix}\right\|},
\end{align*}
whence
\begin{align}\label{T.2.9.I.2}
\Omega(x + y)
\leq {\left\|\begin{bmatrix}
\Omega(x) & \frac{1}{2}{\left\|\begin{bmatrix}
T_{\langle x, y\rangle} & 0
\\ 0 & \theta_{x, y}
\end{bmatrix}\right\|}^{1/2}
\\ \frac{1}{2}{\left\|\begin{bmatrix}
T_{\langle x, y\rangle} & 0
\\ 0 & \theta_{x, y}
\end{bmatrix}\right\|}^{1/2} & \Omega(y)
\end{bmatrix}\right\|}.
\end{align}
On the other hand, by (\ref{T.2.9.I.1}), we have
\begin{align}\label{T.2.9.I.3}
&{\left\|\begin{bmatrix}
\Omega(x) & \frac{1}{2}{\left\|\begin{bmatrix}
T_{\langle x, y\rangle} & 0
\\ 0 & \theta_{x, y}
\end{bmatrix}\right\|}^{1/2}
\\ \frac{1}{2}{\left\|\begin{bmatrix}
T_{\langle x, y\rangle} & 0
\\ 0 & \theta_{x, y}
\end{bmatrix}\right\|}^{1/2} & \Omega(y)
\end{bmatrix}\right\|}\nonumber
\\& \qquad = \frac{1}{2}\left(\Omega(x) + \Omega(y) + \sqrt{(\Omega(x) - \Omega(y))^2 + {\left\|\begin{bmatrix}
T_{\langle x, y\rangle} & 0
\\ 0 & \theta_{x, y}
\end{bmatrix}\right\|}}\right)
\\& \qquad \leq \frac{1}{2}\left(\Omega(x) + \Omega(y) + \sqrt{(\Omega(x) - \Omega(y))^2 + 4\Omega(x)\Omega(y)}\right)
= \Omega(x) + \Omega(y)\nonumber.
\end{align}
Thus
\begin{align*}
{\left\|\begin{bmatrix}
\Omega(x) & \frac{1}{2}{\left\|\begin{bmatrix}
T_{\langle x, y\rangle} & 0
\\ 0 & \theta_{x, y}
\end{bmatrix}\right\|}^{1/2}
\\ \frac{1}{2}{\left\|\begin{bmatrix}
T_{\langle x, y\rangle} & 0
\\ 0 & \theta_{x, y}
\end{bmatrix}\right\|}^{1/2} & \Omega(y)
\end{bmatrix}\right\|}
\leq \Omega(x) + \Omega(y),
\end{align*}
and the proof is completed.
\end{proof}
As a consequence of Theorem \ref{T.2.9}, we have the following result.
\begin{corollary}\label{C.2.10}
Let $\mathscr{V}$ be a Hilbert $\mathscr{A}$-module, and
$x, y\in \mathscr{V}$. If $\Omega(x + y) = \Omega(x) + \Omega(y)$, then
\begin{align*}
\Omega(x)\Omega(y) = \frac{1}{4}{\left\|\begin{bmatrix}
T_{\langle x, y\rangle} & 0
\\ 0 & \theta_{x, y}
\end{bmatrix}\right\|}.
\end{align*}
In particular,
$\Omega(x) = \frac{1}{2}{\left\|\begin{bmatrix}
T_{\langle x, x\rangle} & 0
\\ 0 & \theta_{x, x}
\end{bmatrix}\right\|}^{1/2}$.
\end{corollary}
The following lemma must be known to specialists. For the sake of completeness we include the proof.
\begin{lemma}\label{L.2.11}
Let $\mathscr{V}$ be a Hilbert $\mathscr{A}$-module, and $x, y\in \mathscr{V}$.
Then
\begin{align*}
\left\|\theta_{x, y}\right\| = \left\|{\langle x, x\rangle}^{1/2}{\langle y, y\rangle}^{1/2}\right\|.
\end{align*}
\end{lemma}
\begin{proof}
We may assume that $x, y \neq 0$ otherwise the identity trivially holds.
We have
\begin{align*}
\left\|\theta_{x, y}\left(\frac{y{\langle x, x\rangle}^{1/2}}{\left\|y{\langle x, x\rangle}^{1/2}\right\|}\right)\right\|^2
&= \frac{\left\|x\langle y, y\rangle{\langle x, x\rangle}^{1/2}\right\|^2}{\left\|y{\langle x, x\rangle}^{1/2}\right\|^2}
\\& = \frac{\left\|{\langle x, x\rangle}^{1/2}\langle y, y\rangle\langle x, x\rangle\langle y, y\rangle{\langle x, x\rangle}^{1/2}\right\|}
{\left\|{\langle x, x\rangle}^{1/2}\langle y, y\rangle{\langle x, x\rangle}^{1/2}\right\|}
\\& = \left\|{\langle x, x\rangle}^{1/2}\langle y, y\rangle{\langle x, x\rangle}^{1/2}\right\|
= \left\|{\langle x, x\rangle}^{1/2}{\langle y, y\rangle}^{1/2}\right\|^2,
\end{align*}
and so
\begin{align*}
\left\|\theta_{x, y}\left(\frac{y{\langle x, x\rangle}^{1/2}}{\left\|y{\langle x, x\rangle}^{1/2}\right\|}\right)\right\|
= \left\|{\langle x, x\rangle}^{1/2}{\langle y, y\rangle}^{1/2}\right\|.
\end{align*}
Hence
\begin{align}\label{I.1.L.2.11}
\left\|\theta_{x, y}\right\| \geq \left\|{\langle x, x\rangle}^{1/2}{\langle y, y\rangle}^{1/2}\right\|.
\end{align}
On the other hand, let $z\in\mathscr{V}$ with $\|z\|=1$. By \eqref{C.S.I} we have
$\langle y, z\rangle\langle z, y\rangle \leq \langle y, y\rangle$
and hence by Theorem 2.2.5(2) of \cite{Mu} it follows that
\begin{align*}
{\langle x, x\rangle}^{1/2}\langle y, z\rangle\langle z, y\rangle{\langle x, x\rangle}^{1/2}
\leq {\langle x, x\rangle}^{1/2}\langle y, y\rangle{\langle x, x\rangle}^{1/2}.
\end{align*}
So, \cite[Theorem 2.2.5(3)]{Mu} implies
\begin{align}\label{I.2.L.2.11}
\left\|{\langle x, x\rangle}^{1/2}\langle y, z\rangle\langle z, y\rangle{\langle x, x\rangle}^{1/2}\right\|
\leq \left\|{\langle x, x\rangle}^{1/2}\langle y, y\rangle{\langle x, x\rangle}^{1/2}\right\|.
\end{align}
Therefore,
\begin{align*}
\left\|\theta_{x, y}(z)\right\| &= \left\|x\langle y, z\rangle\right\|
\\&= \left\|\langle z, y\rangle\langle x, x\rangle\langle y, z\rangle\right\|^{1/2}
\\& = \left\|{\langle x, x\rangle}^{1/2}\langle y, z\rangle\langle z, y\rangle{\langle x, x\rangle}^{1/2}\right\|^{1/2}
\\& \stackrel{\eqref{I.2.L.2.11}}{\leq} \left\|{\langle x, x\rangle}^{1/2}\langle y, y\rangle{\langle x, x\rangle}^{1/2}\right\|^{1/2}
= \left\|{\langle x, x\rangle}^{1/2}{\langle y, y\rangle}^{1/2}\right\|,
\end{align*}
whence
\begin{align}\label{I.3.L.2.11}
\left\|\theta_{x, y}\right\| \leq \left\|{\langle x, x\rangle}^{1/2}{\langle y, y\rangle}^{1/2}\right\|.
\end{align}
Utilizing \eqref{I.1.L.2.11} and \eqref{I.3.L.2.11}, we conclude that
$\left\|\theta_{x, y}\right\| = \left\|{\langle x, x\rangle}^{1/2}{\langle y, y\rangle}^{1/2}\right\|$.
\end{proof}
We close this paper with the following result.
\begin{corollary}\label{C.2.12}
Let $\mathscr{V}$ be a Hilbert $\mathscr{A}$-module, and
$x, y\in \mathscr{V}$. If $\langle x, y\rangle = 0$, then
\begingroup\makeatletter\def\f@size{10}\check@mathfonts
\begin{align*}
\Omega(x + y)
\leq \frac{1}{2}\left(\Omega(x) + \Omega(y) + \sqrt{(\Omega(x) - \Omega(y))^2 + \left\|{\langle x, x\rangle}^{1/2}{\langle y, y\rangle}^{1/2}\right\|}\right)
\leq \Omega(x) + \Omega(y).
\end{align*}
\endgroup
\end{corollary}
\begin{proof}
Since $\langle x, y\rangle = 0$, we have $T_{\langle x, y\rangle} = 0$.
Hence from \eqref{T.2.9.I.2}, \eqref{T.2.9.I.3} and Lemma \ref{L.2.11}, we deduce the desired result.
\end{proof}
\bibliographystyle{amsplain}

\end{document}